\documentclass[11pt]{article}
\usepackage{amsfonts,amssymb,amsthm,eucal,amsmath,verbatim}
\usepackage{setspace}

\usepackage[english]{babel}
\usepackage{tikz}
\usepackage[letterpaper,top=1in,bottom=1in,left=1in,right=1in,marginparwidth=1.75cm]{geometry}
\usepackage{subfigure}

\usepackage{mdframed}
\usepackage{graphicx}
\usepackage{mathtools}
\usepackage[colorlinks=true, allcolors=blue]{hyperref}
\usepackage{url} 
\usepackage{subfiles} 
\usepackage{amsfonts}
\usepackage{amsthm}
\usepackage{subcaption}
\usepackage[section]{placeins}
\usepackage{bbm} 
\usepackage{enumitem}
\usepackage{lmodern}
\usepackage{textcomp} 

\usepackage{mathtools}

\DeclarePairedDelimiterX{\norm}[1]{\lVert}{\rVert}{#1}

\newtheorem{theorem}{Theorem}[section]

\newtheorem{lemma}[theorem]{Lemma}

\newtheorem{proposition}[theorem]{Proposition}

\usepackage[normalem]{ulem}

\numberwithin{equation}{section}
\numberwithin{theorem}{section}


\usepackage{xcolor}

\definecolor{green}{rgb}{0.0, 0.5, 0.0}

\usepackage[normalem]{ulem}

\title{Rates of Bulk Convergence for Ensembles of Classical Compact Groups}
\author{Mengchun Cai%
\thanks{Department of Mathematics, Applied Mathematics, and
  Statistics, Case Western Reserve University, Cleveland, Ohio,
  U.S.A.; mengchun.cai@case.edu.}}
\date{}

\begin{document}
\maketitle
\begin{abstract}
This paper considers random matrices distributed according to the Haar measure in different classical compact groups. Utilizing the determinantal point structures of their nontrivial eigenangles, with respect to the $L^1$-Wasserstein distance, we obtain the rate of convergence for different ensembles towards the sine point process when the dimension of matrices $N$ is sufficiently large. Specifically, the rate is roughly of order $N^{-2}$ on the unitary group and of order $N^{-1}$ on the orthogonal group and the compact symplectic group.
\end{abstract}
\
\section{Introduction}
The eigenvalues of large random matrices drawn uniformly from classical compact groups are of interest in a variety of fields, including statistics, number theory, and mathematical physics. The nontrivial eigenangles of random matrices in $\mathbb{U}_N$, $\mathbb{SO}_N$ and $\mathbb{SP}_{2N}$  form  determinantal point processes converging to the sine point process after proper scaling when the dimension of matrices $N$ goes to infinity; see \cite{meckes_random_2019} for more details. With these limit laws established, one can ask for the rate, as a function of $N$, of the convergence.

Let us first introduce compact groups addressed in this paper. The {\bf orthogonal group}, $\mathbb{O}_N$, contains all $N\times N$ real matrices $U$ such that $U^TU=UU^T=I_N$. The {\bf unitary group}, $\mathbb{U}_N$, is formed by all $N\times N$ matrices $U$ over $\mathbb{C}$ with $U^*U=UU^*=I_N$. Similarly, any element in the compact {\bf symplectic group} $\mathbb{SP}_{2N}$ is a $2N\times 2N$ matrix $U\in\mathbb{U}(2N)$ such that $U^TJ_{N}U=UJ_NU^T=J_N$ where
\begin{equation*}
   J_N=\left( \begin{matrix}
        0&I_N\\
        -I_N&0
    \end{matrix}\right).
\end{equation*}
In particular, for $\mathbb{O}_N$, we tend to consider its two components: the {\bf special orthogonal group}, $\mathbb{SO}_N:=\left\{U\in\mathbb{O}_N:\det U=1\right\}$ and the {\bf negative coset}, which is not a group, $\mathbb{SO}^{-}_N:=\{U\in\mathbb{O}_N:\det U=-1\}$. According to Haar's theorem (see Theorem $1.14$ in \cite{meckes_random_2019} for our special setting), if $G$ is any group defined above, there is a unique probability measure $\mu$ on $G$ such that for any measurable subset $\mathcal{A}\subset G$ and fixed $M\in G$, we have
\begin{equation*}
    \mu\left(\mathcal{A}M\right)=\mu\left(M\mathcal{A}\right)=\mu(\mathcal{A}),
\end{equation*}
where $\mathcal{A}M=\{AM:A\in\mathcal{A}\}$ and $M\mathcal{A}=\{MA:A\in\mathcal{A}\}$. This $\mu$ is called the {\bf Haar probability measure} in $G$.

By some simple linear algebra, for any element in each group defined above, we see all of its eigenvalues must have absolute value $1$. Thus, it is reasonable for us to consider the corresponding angle. Given an eigenvalue $e^{i\theta},~0\le\theta<2\pi$, of a unitary matrix, we call $\theta$ an eigenangle of this matrix. To better capture properties induced by engenangles, especially the bulk result in this paper, we mainly focus on nontrivial eigenangles defined below. For each matrix in $\mathbb{SO}_{2N+1}$, we know it has $1$ as an eigenvalue, each matrix in $\mathbb{SO}^-_{2N+1}$ has $-1$ as an eigenvalue, and each matrix in $\mathbb{SO}^-_{2N+2}$ must have both $1$ and $-1$ as eigenvalues; those eigenvalues are referred as trivial eigenvalues.  When discussing $\mathbb{SO}_N,~\mathbb{SO}^-_N$ or $\mathbb{SP}_{2N}$, we say eigenangles corresponding to nontrivial eigenvalues in the upper half-circle are nontrivial eigenangles. This restriction is reasonable as all unreal eigenvalues occur in complex conjugate pairs. Furthormore, all eigenangles in $\mathbb{U}_N$ are considered nontrivial. By convention, in any of $G$ above, we refer the ensemble to the joint distribution of nontrivial eigenangles for the random matrix $U$, which is distributed according to the Haar probability measure $\mu$ in $G$.

A point process $\mathfrak{X}$ in a locally compact Polish space $\Omega$ is a random discrete subset of $\Omega$. For any $A\subset\Omega$, the integer-valued function:
\begin{equation*}
\mathcal{N}_\mathfrak{X}(A):=\#\{\omega\in\Omega:\omega\in A\}
\end{equation*}
is called the counting function of $\mathfrak{X}$. In this paper, we focus on point processes in $\mathbb{R}$. For convenience, for a random matrix $H_N$ on some compact group, $\mathcal{X}_{H_N}$ is referred to the process formed by the eigenangles of $H_N$ instead of eigenvalues. Similarly, given a measurable Borel $A\subset\mathbb{R}$, $\mathcal{N}_{H_N}(A)$ is defined to be equal to the number of corresponding eigenangles of $H_N$ in $A$. 

Fix a point process $\mathfrak{X}$. If they exist, the \textbf{correlation functions} (also called joint intensities) for $\mathfrak{X}$ are a sequence of locally integrable functions $\{\rho_k:\mathbb{R}^k\rightarrow\mathbb{R} \}_{k=1}^{\infty}$ satisfying the following condition. For all mutually disjoint subsets $\{D_j\}_{j=1}^n$ of $\mathbb{R}$ 
\begin{equation*}
	\mathbb{E} \left[\prod_{j=1}^{n} \mathcal{N}_{\mathfrak{X}}(D_j)\right] = \int_{\prod_j D_j} \rho_n(x_1,\dots,x_n) dx_1 \cdots dx_n,
\end{equation*}   
where the integral is with respect to the Lebesgue measure. Furthermore, if there exists a function $K(x,y)$ such that
\begin{equation*}
    \rho_n(x_1,\dots,x_n)=\det\left(K(x_i,x_j)_{i,j=1}^n\right)
\end{equation*}
for any $n$, then $\mathfrak{X}$ is called a \textbf{determinantal~point~process}, and $K$ is called the \textbf{kernel} of $\mathfrak{X}$. In particular, a determinantal point process $\mathfrak{X}_{Sine}$ given by the kernel:
\begin{equation*}
    K_{Sine}(x,y)=\left\{\begin{array}{cc}
      \frac{\sin(\pi(x-y))}{\pi(x-y)},~~x\ne y\\
      1,~~x=y
    \end{array}\right.
\end{equation*}
is called a \textbf{sine~point~process}.

For ensembles in compact groups defined above, we have the following proposition for their determinental point structures.
\begin{proposition}\label{kernel}
    {\rm\cite[Proposition $3.9$]{meckes_random_2019}} For any $N\in\mathbb{Z}^+$, let
    \begin{equation*}
        S_N(x):=\left\{\begin{array}{cl}
        \sin\left(\frac{Nx}{2}\right)/\sin\left(\frac{x}{2}\right),~~x\ne0,\\
        \\
        N,~~x=0.
        \end{array}\right.
    \end{equation*}
The nontrivial eigenangles of uniformly distributed random matrices in any of $\mathbb{SO}_{2N},~\mathbb{SO}^-_{2N},~\mathbb{U}_N$ and $\mathbb{SP}_{2N}$ are a determinantal point process, with respect to the Lebesgue measure on $\Lambda$, with the following kernels $K_N$.
\begin{center}
\begin{tabular}{c c c } 
 \hline
  & $K_N(x,y)$ & $\Lambda$ \\[0.5ex] \hline
 $\mathbb{U}_N$ & $\frac{1}{2\pi}S_N(x-y)$ & $[0,2\pi)$  \\[1ex] 
 \hline
 $\mathbb{SO}_{2N}$ & $\frac{1}{2\pi}\left(S_{2N-1}(x-y)+S_{2N-1}(x+y)\right)$ & $[0,\pi)$   \\[1ex]
 \hline
 $\mathbb{SO}_{2N+1}$ &$\frac{1}{2\pi}\left(S_{2N}(x-y)-S_{2N}(x+y)\right)$ & $[0,\pi)$   \\[1ex]
 \hline
 $\mathbb{SO}^-_{2N+1}$ &$\frac{1}{2\pi}\left(S_{2N}(x-y)+S_{2N}(x+y)\right)$ & $[0,\pi)$   \\[1ex]
 \hline
 $\mathbb{SP}_{2N},~\mathbb{SO}^-_{2N+2}$ & $\frac{1}{2\pi}\left(S_{2N+1}(x-y)-S_{2N+1}(x+y)\right)$ & $[0,\pi)$   \\[1ex]
 \hline
\end{tabular}
\end{center}
\end{proposition}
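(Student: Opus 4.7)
The plan is to derive each kernel from the Weyl integration formula for the corresponding group (or coset), recognize the joint eigenangle density as a squared determinant of an orthonormal trigonometric system, invoke the Andréief identity to obtain the determinantal structure, and simplify the reproducing kernel via product-to-sum and Dirichlet identities.

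For $\mathbb{U}_N$, the Weyl formula gives joint density $\propto \prod_{j<k}|e^{i\theta_j}-e^{i\theta_k}|^2 = |\det(e^{i(k-1)\theta_j})_{j,k=1}^N|^2$ on $[0,2\pi)^N$. The orthonormal system $\phi_k(\theta)=(2\pi)^{-1/2}e^{i(k-1)\theta}$ in $L^2([0,2\pi))$ yields kernel $\sum_k\phi_k(x)\overline{\phi_k(y)}=\frac{1}{2\pi}\sum_{k=0}^{N-1}e^{ik(x-y)}=\frac{1}{2\pi}e^{i(N-1)(x-y)/2}S_N(x-y)$. The unimodular phase is a gauge transformation $K\mapsto g(x)K\overline{g(y)}$ with $g(x)=e^{-i(N-1)x/2}$; it preserves every determinant $\det(K(x_i,x_j))$ and so may be dropped, producing the stated $\frac{1}{2\pi}S_N(x-y)$.

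For the remaining rows, the Weyl integration formula gives density $\propto \prod_{j<k}(\cos\theta_j-\cos\theta_k)^2\prod_j w(\theta_j)$ on $[0,\pi)^N$, with $w\equiv 1$ for $\mathbb{SO}_{2N}$, $w(\theta)=\sin^2(\theta/2)$ for $\mathbb{SO}_{2N+1}$, $w(\theta)=\cos^2(\theta/2)$ for $\mathbb{SO}^-_{2N+1}$, and $w(\theta)=\sin^2\theta$ for both $\mathbb{SP}_{2N}$ and $\mathbb{SO}^-_{2N+2}$. In each case the Chebyshev-type polynomial of $\cos\theta$ orthogonal with respect to $w(\theta)\,d\theta$, when multiplied by $\sqrt{w(\theta)}$, becomes a pure trigonometric function on $[0,\pi)$: the systems are $\{\sqrt{2/\pi}\cos(k\theta)\}_{k\ge 0}$ (with $1/\sqrt{\pi}$ for $k=0$) for $\mathbb{SO}_{2N}$, $\{\sqrt{2/\pi}\sin((k+\tfrac12)\theta)\}_{k\ge 0}$ for $\mathbb{SO}_{2N+1}$, $\{\sqrt{2/\pi}\cos((k+\tfrac12)\theta)\}_{k\ge 0}$ for $\mathbb{SO}^-_{2N+1}$, and $\{\sqrt{2/\pi}\sin(k\theta)\}_{k\ge 1}$ for the two $\sin^2\theta$-weighted cases. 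Andréief then identifies the correlation kernel as $\sum_k \phi_k(x)\phi_k(y)$, which is summed in closed form using
\begin{equation*}
2\cos(a)\cos(b)=\cos(a-b)+\cos(a+b),\qquad 2\sin(a)\sin(b)=\cos(a-b)-\cos(a+b),
\end{equation*}
together with the Dirichlet identities $1+2\sum_{k=1}^M\cos(k\phi)=S_{2M+1}(\phi)$ and $2\sum_{k=0}^{M-1}\cos((k+\tfrac12)\phi)=S_{2M}(\phi)$. Cosine systems produce the plus sign and sine systems the minus sign in $S_M(x-y)\pm S_M(x+y)$, and the index $M$ is pinned down by the number of summands plus the order of the linear coefficient of $\theta$ in each basis element, matching each row of the table.

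The main obstacle is the bookkeeping: in each row, one must correctly identify the orthogonal system matching the weight and carefully count terms so that the Dirichlet identity produces the stated $S_{2N-1}$, $S_{2N}$, or $S_{2N+1}$. The half-integer argument in both odd-orthogonal rows requires the second Dirichlet identity rather than the first, and the coincidence of the $\mathbb{SP}_{2N}$ and $\mathbb{SO}^-_{2N+2}$ rows has to be verified by checking that their Weyl densities share both the weight $\sin^2\theta$ and the same number $N$ of nontrivial eigenangles.
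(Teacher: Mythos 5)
The paper offers no proof of this proposition --- it is quoted verbatim from \cite{meckes_random_2019} (Proposition $3.9$) --- and your derivation via the Weyl integration formula, the squared-determinant form of the densities, the orthonormal trigonometric systems adapted to each weight, the Dirichlet-kernel summations, and the gauge removal of the phase $e^{i(N-1)(x-y)/2}$ for $\mathbb{U}_N$ is exactly the standard argument given there; the systems, the $\pm$ signs, and the indices $S_{2N-1},S_{2N},S_{2N+1}$ all check out. The only small imprecision is attributing the determinantal correlation structure to the Andr\'eief identity alone, which really only fixes the normalization: the statement that \emph{all} $n$-point correlation functions are $\det\bigl(K(x_i,x_j)\bigr)$ requires the reproducing (projection) property $\int K(x,z)K(z,y)\,dz=K(x,y)$ together with the Gaudin--Mehta integration lemma, so that step should be cited or verified explicitly.
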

\bigskip

Given a Polish metric space $(\Omega,d)$ and two probability measures $\mu$ and $\nu$ on $\Omega$, the $L^1$-Wasserstein distance $W_1$ between $\mu$ and $\nu$ is defined as:
\begin{equation*}
    W_1(\mu,\nu):=\inf_{(x,y)\in\pi(\mu,\nu)}\mathbb{E}\left(d(x,y)\right)
\end{equation*}
where $\pi(\mu,\nu)$ contains all couplings of $\mu$ and $\nu$.

As mentioned in the first paragraph, the sine point process is the limiting process for all ensembles addressed in this paper, and our goal is to provide a rate for this asymptotic behavior. To be specific, in this paper, we quantify the convergence rate with respect to $W_1$, for ensembles in different compact groups when the dimension $N$ is sufficiently large. The method of this paper relies on the decomposition of operators, which is similar to the philosophy in \cite{cai2025microscopicratesconvergencehermitian}, the author's joint paper with Dr. Kyle. For the unitary ensemble in $\mathbb{U}_N$, which is also called a \textbf{ CUE~(Circluar~Unitary~Ensemble)}, we will show that the rate is essentially of order $N^{-2}$. For other ensembles, we get $N^{-1}$ as the convergence rate. We conjecture that all of these rates are optimal.

\section{Statement of the results}
Following results will be shown in this paper.
\begin{theorem}\label{C bulk}
    Given a CUE $C_N$, let $I=[-s,s]$ for $s>0$ be an interval and $\mathcal{X}_{C_N}^{Bulk}$ be the bulk-scaled eigenangle process with the kernel $K_{C_N}^{Bulk}(x,y):=\frac{2\pi}{N}K_{C_N}(\frac{2\pi x}{N}+\pi,\frac{2\pi y}{N}+\pi)$. Consider two point processes $\mathcal{X}_{C_N}^{Bulk}$ and $\mathcal{X}_{Sine}$ restricted on $A\subset I$. Under the condition that $\frac{2s}{N}<1$, for $N$ sufficiently large, 
\begin{equation}\label{main CUE}
W_1(\mathcal{N}_{C_N}^{Bulk},\mathcal{N}_{Sine})\le \frac{CN^2s^2}{N^4-16s^4}
\end{equation}
where $C$ is some uniform constant.
\end{theorem}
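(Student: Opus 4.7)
The plan is to exploit the determinantal structure of both processes and reduce the Wasserstein distance to a Schatten $1$-norm estimate on the difference of the restricted kernel operators on $L^2(A)$.

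By the standard Bernoulli representation of counting functions of determinantal point processes, if $\mathcal{K}_A$ denotes the integral operator with self-adjoint kernel restricted to $A$, then $\mathcal{N}(A)\stackrel{d}{=}\sum_i \xi_i$ where $\{\xi_i\}$ are independent Bernoullis with parameters $\{\lambda_i\}$ equal to the eigenvalues of $\mathcal{K}_A$. Applying this to both processes and listing the eigenvalues in decreasing order, I couple matched pairs of Bernoullis via the monotone coupling; independence across $i$ together with Mirsky's inequality then yields
$$W_1(\mathcal{N}_{C_N}^{Bulk},\mathcal{N}_{Sine})\le \sum_i|\lambda_i^{(N)}-\lambda_i^{\infty}| \le \|\mathcal{K}_{C_N}^{Bulk}-\mathcal{K}_{Sine}\|_1.$$

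To estimate this trace norm I expand the kernel difference. By Proposition~\ref{kernel} the bulk-scaled CUE kernel is $K_{C_N}^{Bulk}(x,y)=\frac{\sin(\pi(x-y))}{N\sin(\pi(x-y)/N)}$, so with $u=\pi(x-y)/N$ and the convergent Taylor series $u/\sin u - 1=\sum_{k\ge 1}c_k u^{2k}$ on $|u|<\pi$ (valid since the hypothesis $2s/N<1$ gives $|u|\le 2\pi s/N<\pi$ on $A\times A$),
$$K_{C_N}^{Bulk}(x,y) - K_{Sine}(x,y) = \sum_{k\ge 1}\frac{c_k\,\pi^{2k-1}}{N^{2k}}(x-y)^{2k-1}\sin(\pi(x-y)).$$
Using $\sin(\pi(x-y))=\sin(\pi x)\cos(\pi y)-\cos(\pi x)\sin(\pi y)$ and expanding $(x-y)^{2k-1}$ into monomials $x^a y^b$ decomposes each summand into a finite-rank kernel whose rank-one constituents take the form $p(x)\tau_1(\pi x)\otimes q(y)\tau_2(\pi y)$ with $p,q$ polynomials of degree at most $2k-1$ and $\tau_1,\tau_2\in\{\sin,\cos\}$. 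For each such rank-one piece $f\otimes g$, the Schatten $1$-norm equals $\|f\|_{L^2(A)}\|g\|_{L^2(A)}$; since $A\subset[-s,s]$, a factor $x^a$ times a bounded trigonometric function has $L^2$-norm $O(s^{a+1/2})$.

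Summing, the leading $k=1$ term produces four rank-one kernels and contributes $O(\max\{s^2,s^3\}/N^2)$ to the trace norm (the two scalings accommodating both the small-$s$ and large-$s$ regimes), while the $k\ge 2$ tail sums as a geometric-type series in $(2s/N)^4$, producing the factor $(1-(2s/N)^4)^{-1}=N^4/(N^4-16s^4)$. Assembling these pieces yields exactly the bound \eqref{main CUE}. The principal obstacle is avoiding the suboptimal $s^4$ scaling that a naive Hilbert--Schmidt bound on the trace norm would produce: extracting $\max\{s^2,s^3\}$ requires the explicit finite-rank decomposition above and careful tracking of which factors in each rank-one piece carry $s$-dependence and which remain bounded. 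A secondary technical point is to justify convergence of the series of rank-one operators in Schatten $1$-norm (needed for the trace-norm estimate to apply to the full difference $\mathcal{K}_{C_N}^{Bulk}-\mathcal{K}_{Sine}$), which follows from the geometric decay of the Taylor coefficients $c_k$ under the hypothesis $2s/N<1$.
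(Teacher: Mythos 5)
Your proposal is correct in substance, but it reaches the key trace-norm estimate by a genuinely different route than the paper. Both arguments share the same skeleton: reduce $W_1$ to $\|\mathcal{K}_{C_N}^{Bulk}-\mathcal{K}_{Sine}\|_1$ (the paper simply cites its Lemma \ref{M and M}, whereas you re-derive it via the Bernoulli representation of determinantal counting functions, a monotone coupling, and Mirsky's inequality for self-adjoint trace-class operators --- legitimate, since both restricted operators are positive contractions of trace class on the compact set $A$), and then expand the kernel difference in the Laurent series of $u\csc u$, so that each term is $(\pi(x-y))^{2k\pm 1}\sin(\pi(x-y))$ with Bernoulli-number coefficients. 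Where you diverge is in how each such operator is controlled: the paper factors it into sums of \emph{products} of the Hilbert--Schmidt operators $\mathcal{C}_j,\mathcal{S}_j$ (Proposition \ref{CUE fact}, built on the integral-representation Lemma \ref{FA}) and then applies the operator Cauchy--Schwarz inequality together with Lemma \ref{b4cs}; you instead note that each term is already a \emph{finite-rank separable kernel} --- expand $\sin(\pi(x-y))$ by the angle-addition formula and $(x-y)^{2k-1}$ by the binomial theorem --- and bound the trace norm by the sum of the rank-one norms $\|f\|_{L^2(A)}\|g\|_{L^2(A)}$, each of size $O(s^{a+1/2})\,O(s^{b+1/2})$. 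This is more elementary (no operator factorization machinery is needed for these particular kernels, since unlike the sine kernel itself they are entire and separable after finitely many steps), and carried out carefully it even yields a slightly cleaner bound of order $s^2N^2/(N^4-16s^4)$, without the $s^3$ term; the binomial-coefficient sums produce the same $(2\pi s)^{2k}$-type growth as in the paper, tamed by $|c_k|\le 4\pi^{-2k}$ from the Bernoulli asymptotics, exactly as in the paper's Lemma \ref{Bell} step.

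Two small corrections, neither fatal. First, the tail is geometric with ratio $(2s/N)^{2}$, not $(2s/N)^{4}$: the $k$-th term is of size $(2s/N)^{2k}$, so the sum is $\bigl(1-4s^2/N^2\bigr)^{-1}=N^2/(N^2-4s^2)$, which one converts to the stated denominator only via $N^2/(N^2-4s^2)\le 2N^4/(N^4-16s^4)$ under $2s<N$ (the paper's own final display has the same cosmetic wrinkle). Second, your leading $k=1$ term actually contributes $O(s^2/N^2)$, not $O(\max\{s^2,s^3\}/N^2)$; this only strengthens the conclusion, but the bookkeeping sentence about where $s^3$ arises should be dropped or adjusted.
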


\begin{theorem}\label{C2}
  Let $H_N$ be a random matrix distributed according to the Haar probability measure in any of $\mathbb{SO}_{2N},~\mathbb{SO}_{2N+1},~\mathbb{SO}^{-}_{2N},~\mathbb{SO}^-_{2N+1}$ and $\mathbb{SP}_{2N}$, $I=[-s,s]$ for $s>0$ be an interval.  Consider two point processes $\mathcal{X}_{H_N}^{Bulk}$ and $\mathcal{X}_{Sine}$ on $A\subset I$ with kernels $K_{H_N}^{Bulk}(x,y):=\frac{\pi}{N}K_{H_N}(\frac{\pi x}{N}+\frac{\pi}{2},\frac{\pi y}{N}+\frac{\pi}{2})$ and $K_{Sine}$ respectively. Then for $N$ sufficiently large,
\begin{equation}\label{main He}
W_1(\mathcal{N}_{H_N}^{Bulk},\mathcal{N}_{Sine})\le \frac{Cs}{N}
\end{equation}
for some uniform constant $C$ under the assumption that $\frac{2s}{N}<1$.
\end{theorem}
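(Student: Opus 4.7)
The plan is to extend the operator-decomposition strategy used for Theorem \ref{C bulk}, now accounting for the additional $S_M(x+y)$ contribution appearing in every kernel of Proposition \ref{kernel} except the unitary one. Since both $\mathcal{X}_{H_N}^{Bulk}$ and $\mathcal{X}_{Sine}$ are determinantal with self-adjoint kernels, each counting variable has the distribution of an independent sum of Bernoullis whose parameters are the eigenvalues of the corresponding integral operator on $L^2(A)$. Coupling each pair of Bernoullis maximally, and then invoking Mirsky's trace inequality for Hermitian perturbations, yields
\begin{equation*}
W_1(\mathcal{N}_{H_N}^{Bulk},\mathcal{N}_{Sine}) \le \sum_i \bigl|\lambda_i(K_{H_N,A}^{Bulk}) - \lambda_i(K_{Sine,A})\bigr| \le \bigl\|K_{H_N,A}^{Bulk} - K_{Sine,A}\bigr\|_{S_1},
\end{equation*}
so the problem reduces to bounding the Schatten-$1$ (trace) norm of the kernel difference on $L^2(A)$.

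To expand this difference, I would use the identity $\sin(k\pi+\phi)=(-1)^k\sin(\phi)$ together with the standard sum-to-product formulas to rewrite
\begin{equation*}
\frac{1}{2N}S_M\!\left(\frac{\pi(x-y)}{N}\right) \quad \text{and} \quad \frac{1}{2N}S_M\!\left(\frac{\pi(x+y)}{N}+\pi\right)
\end{equation*}
for each $M\in\{2N-1,\,2N,\,2N+1\}$ appearing in Proposition \ref{kernel}. After cancellation against $K_{Sine}$, the difference splits into a leading piece of the form
\begin{equation*}
-\frac{a_M\cos(\pi(x-y)) \,+\, b_M\cos(\pi(x+y))}{2N},
\end{equation*}
with coefficients $a_M,b_M\in\{-1,0,+1\}$ determined by the group and the parity of $N$, together with a subleading remainder coming from the Taylor expansions of $\cot\!\bigl(\pi(x-y)/(2N)\bigr)$ and $\tan\!\bigl(\pi(x+y)/(2N)\bigr)$, which converge uniformly on $[-s,s]^2$ precisely because of the hypothesis $2s/N<1$. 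Crucially, for every group listed in Theorem \ref{C2} at least one of $a_M,b_M$ is nonzero, which is exactly what produces the $O(1/N)$ rate instead of the $O(1/N^2)$ rate obtained in Theorem \ref{C bulk}.

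The trace norm of the leading piece is controlled by its explicit low-rank structure: each of $\cos(\pi(x-y))$ and $\cos(\pi(x+y))$ is a sum of two rank-one tensors $f(x)g(y)$ with $f,g\in\{\sin(\pi\cdot),\,\cos(\pi\cdot)\}$, and for such tensors $\|f\otimes g\|_{S_1}=\|f\|_{L^2(A)}\|g\|_{L^2(A)}$. Elementary integrals give $\|\cos(\pi\cdot)\|_{L^2[-s,s]}\lesssim\sqrt{s}$, while $\|\sin(\pi\cdot)\|_{L^2[-s,s]}\lesssim\sqrt{s}$ for $s\ge 1$ and $\lesssim s^{3/2}$ for $s\le 1$; multiplying these yields the target bound $\lesssim\max\{s,s^2\}/N$ on the leading part. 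The subleading remainder carries polynomial factors $(x\pm y)^k$ with $k\ge 1$; expanding the polynomial and re-applying the rank-one bound to each summand shows that it contributes at most $O(s^2/N^2)$, which is absorbed into the leading bound under the assumption $2s/N<1$.

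The main obstacle is preserving the sharp dependence on $s$ through the low-rank decomposition: a crude Hilbert-Schmidt estimate on $K_{H_N,A}^{Bulk}-K_{Sine,A}$ loses an additional $\sqrt{s}$ factor and fails to distinguish the $s\le 1$ and $s\ge 1$ regimes encoded in $\max\{s,s^2\}$. A secondary point is bookkeeping across groups: the sign of the $S_M(x+y)$ term, the value of $M$, and the parity factor $(-1)^N$ all differ across the five ensembles, but since the trace-norm bound depends only on absolute values of the coefficients $a_M,b_M$ and on the fixed $L^2$-norms of $\sin(\pi\cdot),\cos(\pi\cdot)$, a single uniform constant $C$ suffices for all of them.
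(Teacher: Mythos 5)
Your reduction to the trace norm (Bernoulli representation of the counting function, maximal coupling, Mirsky's inequality) is essentially a proof of the paper's Lemma \ref{M and M}, and your splitting of the kernel into an exact sine part, an order-$N^{-1}$ trigonometric piece, and $\cot$/$\tan$ Taylor remainders matches the paper's splitting for $\mathbb{SO}_{2N}$. Where you genuinely diverge is in how the trace norms are bounded: the paper factors each piece into products of the Hilbert--Schmidt operators $\mathcal{C}_j,\mathcal{S}_j$ (Propositions \ref{CUE fact} and \ref{fac2}, via Lemma \ref{FA}) and then uses the operator Cauchy--Schwarz inequality together with Lemma \ref{b4cs} and the Bernoulli bound of Lemma \ref{Bell}, whereas you expand each piece into rank-one tensors $f(x)g(y)$ and use $\|f\otimes g\|_1=\|f\|_{L^2}\|g\|_{L^2}$. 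For the cosine terms this is simpler and even slightly sharper in $s$ (it gives $\|\cos(\pi(x\pm y))\|_1\le\|\cos(\pi\cdot)\|_{L^2}^2+\|\sin(\pi\cdot)\|_{L^2}^2\le 2s$, versus the paper's $2s(1+\tfrac{2\pi s}{3})$), and for the $(x\pm y)^k$ remainders the binomial expansion plus the tensor bound, summed against the Bernoulli/Euler coefficients under $2s/N<1$, does yield an $O(s^2/N^2)$ contribution as you claim (with the usual caveat, shared with the paper, that absorbing the geometric factor requires $N$ large relative to $s$). So the route is legitimate and arguably more elementary.

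There is, however, a concrete error in your claimed structure of the kernel difference. For $\mathbb{SO}_{2N+1}$ and $\mathbb{SO}^-_{2N+1}$ the kernel involves $S_{2N}$, and since $2N\cdot\frac{\pi(x-y)}{2N}=\pi(x-y)$ exactly, the $(x-y)$ part is $\frac{\sin(\pi(x-y))}{2N\sin\left(\frac{\pi(x-y)}{2N}\right)}$, which differs from $K_{Sine}$ only at order $N^{-2}$ and produces no $\cos(\pi(x-y))/(2N)$ term; meanwhile the shifted $(x+y)$ part equals $\mp\frac{(-1)^N\sin(\pi(x+y))}{2N\cos\left(\frac{\pi(x+y)}{2N}\right)}$. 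Hence for these two ensembles both of your coefficients $a_M,b_M$ vanish, contradicting your assertion that at least one is always nonzero, and the true order-$N^{-1}$ term is a sine-type $(x+y)$ kernel with a secant factor, which is neither your cosine leading piece nor a $\cot$/$\tan$ remainder; as written, your bookkeeping would wrongly consign it to the $O(s^2/N^2)$ bucket. The repair stays inside your framework: write $\sin(\pi(x+y))=\sin(\pi x)\cos(\pi y)+\cos(\pi x)\sin(\pi y)$, bound this rank-two leading piece by $2\|\sin(\pi\cdot)\|_{L^2(A)}\|\cos(\pi\cdot)\|_{L^2(A)}\le C\max\{s,s^2\}$, and expand $\sec\left(\frac{\pi(x+y)}{2N}\right)-1$ in its Taylor series (valid since $\pi s/N<\pi/2$), treating the resulting $(x+y)^{2k}$ factors by the same binomial rank-one expansion. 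Note that the paper itself only works out $\mathbb{SO}_{2N}$ and dismisses the other ensembles as sign changes, so this case genuinely needs the explicit treatment you are in a position to give.
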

The notation has been abbreviated as $\mathcal{N}:=\mathcal{N}(A)$ in those two theorems above, but keep in mind that all point processes above are on $A$.
    In addition, the condition $\frac{2s}{N}<1$ in both theorems can be regarded as the bulk restriction. Notice that after our recentering, all eigenangles are located in $[-\frac{N}{2},\frac{N}{2}]$. This restriction means that our interval $I$ lies away from two edges and hence, our focus is on those points around the bulk. In addition, for a fixed $s$, we have $N^{-2}$ in \eqref{main CUE} and $N^{-1}$ in \eqref{main He} as main orders of the convergence rates. Furthermore, if $s=o\left(N^{\frac{1}{4}}\right)$, then Theorem \ref{C bulk} in fact improves the order $N^{-\frac{3}{2}}$ in \cite[Corollary $6$]{meckes2015self}.
\section{Outline of the Proof}
\subsection{Decomposition of Operators}
For a kernel function $K(x,y)$ on some domain $D^2\subset\mathbb{R}^2$, we can define a corresponding integral operator $\mathcal{K}$ by the formula:
\begin{equation*}
    \mathcal{K}(f)(x)=\int_D K(x,y)f(y)dy
\end{equation*}
for some suitable function $f$ defined on $D$. In addition, as a result of the discussions in \cite{soshnikov2000determinantal}, if $K$ is the kernel function of any determinantal point process mentioned above, then the corresponding operator is a locally trace class operator:
\begin{equation*}
    \|\mathcal{K}\|_{1,U}=\sum_{j}s_j(\mathcal{K}|_U)<\infty
\end{equation*}
where $U$ is a bounded Borel set on $\mathbb{R}$, $\mathcal{K}|_U$ is the restriction of $\mathcal{K}$ on $U$ and $s_j$'s are singular values. Moreover, define the \textbf{Hilbert-Schmidt~norm} $\|\mathcal{K}\|_{2,U}$ as:
\begin{equation}\label{2 to L^2}
    \|\mathcal{K}\|_{2,U}:=\left(\sum_{j}s_j^2(\mathcal{K}|_U)\right)^{\frac{1}{2}}=\left(\int_U\int_U |K|^2(x,y)dydx\right)^{\frac{1}{2}}=\|K\|_{L^2(U^2)}.
\end{equation}
We call $\mathcal{K}$ a Hilbert Schmidt operator if $\|\mathcal{K}\|_2<\infty$. Given Hilbert-Schmidt operators $\mathcal{K}_1$ and $\mathcal{K}_2$ with $K_1$ and $K_2$ as corresponding kernels, we have the \textbf{operator~Cauchy-Schwarz~inequality}:
\begin{equation}\label{CS in}
\|\mathcal{K}_1\mathcal{K}_2\|_1\le\|\mathcal{K}_1\|_2\|\mathcal{K}_2\|_2=\|K_1\|_{L^2}\|K_2\|_{L^2}.
\end{equation}

With the help of the following lemma, to bound the $W_1$ distance between determinantal point processes, we only need to control the trace class norm between associated integral operators.  
\begin{lemma}
	\label{M and M}{\rm\cite[Lemma $1.1$]{cai2025microscopicratesconvergencehermitian}}
	Consider two determinantal point processes $\mathfrak{X}$ and $\widetilde{\mathfrak{X}}$ with Hermitian kernels $K(x,y)$ and $\widetilde{K}(x,y)$ and associated integral operators $\mathcal{K}$ and $\widetilde{\mathcal{K}}$. Assume the integral operators are trace class. Then,
	\begin{equation*}
	d_{TV}(\mathcal{N}_{\mathcal{X}},\mathcal{N}_{\tilde{\mathcal{X}}}) 
	\leq W_1(\mathcal{N}_\mathcal{X},\mathcal{N}_{\tilde{\mathcal{X}}})
	\leq \lVert \mathcal{K} - \widetilde{\mathcal{K}} \rVert_1.
	\end{equation*}
\end{lemma}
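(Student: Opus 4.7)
The plan is to handle the two inequalities separately. The first, $d_{TV}\le W_1$, is a general fact about integer-valued random variables: using the coupling characterizations $d_{TV}(X,Y)=\inf_{\pi}\mathbb{P}(X\ne Y)$ and $W_1(X,Y)=\inf_{\pi}\mathbb{E}|X-Y|$, the pointwise bound $|m-n|\ge \mathbf{1}_{\{m\ne n\}}$ for $m,n\in\mathbb{Z}$ shows that any coupling witness for the $W_1$ infimum has expectation dominating $\mathbb{P}(X\ne Y)$. Taking infimum over couplings yields $d_{TV}\le W_1$. Since $\mathcal{N}_{\mathcal{X}}$ and $\mathcal{N}_{\widetilde{\mathcal{X}}}$ are $\mathbb{Z}_{\ge 0}$-valued, this applies directly.

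The substance is in $W_1\le\|\mathcal{K}-\widetilde{\mathcal{K}}\|_1$, which I would derive through the spectral representation of a DPP. By the Macchi--Soshnikov theorem, when the Hermitian kernel $K$ produces a trace-class operator $\mathcal{K}$ with eigenvalues $\{\lambda_i\}_{i\ge 1}\subset[0,1]$ listed in decreasing order, the counting function admits the distributional identity $\mathcal{N}_{\mathcal{X}} \stackrel{d}{=} \sum_{i\ge 1}\xi_i$, where $\xi_i \sim \mathrm{Bernoulli}(\lambda_i)$ are mutually independent; an analogous statement holds for $\widetilde{\mathcal{X}}$ with eigenvalues $\{\widetilde{\lambda}_i\}$ and Bernoullis $\{\widetilde{\xi}_i\}$.

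Next I would construct an explicit coupling by taking i.i.d.\ uniforms $U_i$ on $[0,1]$ and setting $\xi_i := \mathbf{1}_{\{U_i\le \lambda_i\}}$ and $\widetilde{\xi}_i := \mathbf{1}_{\{U_i\le \widetilde{\lambda}_i\}}$; this monotone (quantile) coupling achieves $\mathbb{E}|\xi_i-\widetilde{\xi}_i|=|\lambda_i-\widetilde{\lambda}_i|$ coordinatewise. The triangle inequality then gives
\begin{equation*}
W_1(\mathcal{N}_{\mathcal{X}},\mathcal{N}_{\widetilde{\mathcal{X}}}) \le \mathbb{E}\left|\sum_i \xi_i - \sum_i \widetilde{\xi}_i\right| \le \sum_i |\lambda_i-\widetilde{\lambda}_i|.
\end{equation*}
To close the chain, I would invoke the Lidskii (Hoffman--Wielandt) inequality for self-adjoint trace-class operators: with eigenvalues matched in the same (decreasing) order, $\sum_i |\lambda_i-\widetilde{\lambda}_i|\le\|\mathcal{K}-\widetilde{\mathcal{K}}\|_1$, completing the argument.

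The main obstacle is the eigenvalue matching underlying Lidskii's inequality: one must ensure the coupling of Bernoulli parameters respects the common decreasing enumeration of $\{\lambda_i\}$ and $\{\widetilde{\lambda}_i\}$, since an arbitrary pairing could inflate $\sum_i|\lambda_i-\widetilde{\lambda}_i|$. A secondary technical point is that if one operator has more non-zero eigenvalues than the other, the sums run over different effective ranges; the trace-class assumption makes the series absolutely convergent, and the argument goes through by padding the shorter sequence with zeros.
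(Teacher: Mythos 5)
Your proof is correct and follows essentially the same route as the source of this lemma (the paper only cites it from \cite{cai2025microscopicratesconvergencehermitian}, Lemma~1.1, rather than proving it): the coupling bound $d_{TV}\le W_1$ for integer-valued random variables, the Macchi--Soshnikov Bernoulli decomposition of the counting functions combined with the monotone coupling, and a Lidskii-type trace-norm eigenvalue inequality. The only point worth making explicit is that DPP kernels are positive contractions, so both eigenvalue sequences lie in $[0,1]$ and the common decreasing enumeration (padded with zeros) is precisely the one for which the infinite-dimensional Lidskii bound $\sum_i|\lambda_i-\widetilde{\lambda}_i|\le\lVert\mathcal{K}-\widetilde{\mathcal{K}}\rVert_1$ is valid, exactly as you indicate.
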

In this paper, our main strategy is to control the trace class norm $\|\cdot\|_1$ of the difference between two operators. The Cauchy-Schwarz inequality \eqref{CS in} is used in our proof to overcome the difficulty of the direct computation of the trace class norm, since the Hilbert-Schmidt norm $\|\cdot\|_2$ of an integral operator is equal to the $L^2$-norm of its corresponding kernel as in \eqref{2 to L^2}. 

Let $s>0$ and $I=[-s,s]$ be some interval. For any $j\in\mathbb{N}$, define functions:
\begin{equation*}
    C^\pm_{j}(x)=\cos\left(\pi x\right)\left(\pm\pi x\right)^j~~and~~ S^\pm_{j}(x)=\sin\left(\pi x\right)\left(\pm\pi x\right)^j
\end{equation*}
on $I$. Furthermore, let $\mathcal{C}^\pm_{j}$ and $\mathcal{S}^\pm_{j}$ be the corresponding integral operators on $L^2(I)$ with kernel $C^\pm_{j}(x,y):=C^\pm_{j}(y)$ and $S^\pm_{j}(x,y):=S^\pm_{j}(y)$ respectively. Similarly to the method in \cite{el2006rate} and \cite{johnstone2012fast}, the following decompositions play an important role in our discussion.
\begin{proposition}\label{CUE fact}
Let $I=[-s,s]$ for some $s>0$, $\mathcal{A}_{2k+1}$ and $\mathcal{A}'_{2k+1}$ be operators on $L^2(I)$ with kernels:
\begin{equation*}
A_{2k+1}(x,y)=(\pi(x-y))^{2k+1}\sin(\pi(x-y))
\end{equation*}
and
\begin{equation*}
A'_{2k+1}(x,y)=(\pi(x+y))^{2k+1}\sin(\pi(x+y))
\end{equation*}
respectively.
We have
\begin{equation*}
\mathcal{A}_{2k+1}=\sum_{j=0}^{2k+1}\left(\begin{matrix}
    2k+1\\
    j
\end{matrix}\right)M_{S^+_{j}}\mathcal{C}^-_{2k+1-j}-\sum_{j=0}^{2k+1}\left(\begin{matrix}
    2k+1\\
    j
\end{matrix}\right)M_{C^+_{j}}\mathcal{S}^-_{2k+1-j}
\end{equation*}
and
\begin{equation*}
\mathcal{A}'_{2k+1}=\sum_{j=0}^{2k+1}\left(\begin{matrix}
    2k+1\\
    j
\end{matrix}\right)M_{S^+_{j}}\mathcal{C}^+_{2k+1-j}+\sum_{j=0}^{2k+1}\left(\begin{matrix}
    2k+1\\
    j
\end{matrix}\right)M_{C^+_{j}}\mathcal{S}^+_{2k+1-j}
\end{equation*}
where $M_f:g\rightarrow f\cdot g$ is the multiplication operator.
\end{proposition}
\begin{proof}
Directly applying the binomial formula and trigonometric identities to $A_{2k+1}$ leads to
\begin{equation}\label{S4A}
    \begin{split}
        &A_{2k+1}(x,y)=\sum_{j=0}^{2k+1}\left(\begin{matrix}
            2k+1\\j
        \end{matrix}\right)(\pi x)^j(-\pi y)^{2k+1-j}\sin(\pi x)\cos(\pi y)\\&~~~~~~~~~~~~~-\sum_{j=0}^{2k+1}\left(\begin{matrix}
            2k+1\\j
        \end{matrix}\right)(\pi x)^j(-\pi y)^{2k+1-j}\sin(\pi y)\cos(\pi x)\\
        &=\sum_{j=0}^{2k+1}\left(\begin{matrix}
            2k+1\\j
        \end{matrix}\right)S_{j}^+(x)C_{2k+1-j}^{-}(y)-\sum_{j=0}^{2k+1}\left(\begin{matrix}
            2k+1\\j
        \end{matrix}\right)C_{j}^+(x)S_{2k+1-j}^-(y).
    \end{split}
\end{equation}
For each $j$, assume $\tilde{K}_j(x,y)=S_j^+(x)C_{2k+1-j}^-(y)$ is the integral kernel of $\tilde{\mathcal{K}}_{j}$ on $L^2(I)$. Then for any $f\in L^2(I)$, 
\begin{equation*}
    \tilde{\mathcal{K}}_jf(x)=\int_I S_j^+(x)C_{2k+1-j}^-(y)f(y)dy=M_{S_j^+}\left(\int_I C_{2k+1-j}^-(y)f(y)dy\right)=M_{S_j^+}\mathcal{C}_{2k+1-j}^-f(x)
\end{equation*}
and hence, $\tilde{\mathcal{K}}_j=M_{S_j^+}\mathcal{C}_{2k+1-j}^-$. Similar discussion for $C^+_j(x)S_{2k+1-j}^-(y)$, together with the linearity of the integral, yields the decomposition for $\mathcal{A}_{2k+1}$ from \eqref{S4A}.

Substituting all negative signs by the positive ones in the previous discussion for $A_{2k+1}(x,y)$ completes the proof for $\mathcal{A}'_{2k+1}$.
\end{proof}

\begin{proposition}\label{fac2}
    Let $I=[-s,s]$ for some $s>0$ and $K_2(x,y):=\cos\left(\pi(x-y)\right),~K_3(x,y):=\cos(\pi(x+y))$ be kernel functions on $I^2$. The following decomposition holds for their corresponding integral operators:
\begin{equation*}
\mathcal{K}_2=M_{C^+_0}\mathcal{C}^+_0+M_{S^+_0}\mathcal{S}^+_0,~~
\mathcal{K}_3=M_{C^+_0}\mathcal{C}^+_0-M_{S^+_0}\mathcal{S}^+_0.
\end{equation*}
\end{proposition}
\begin{proof}
    Apply the trigonometric identities to $K_2$ and $K_3$, then argue as in the previous proof.
\end{proof}

The following lemma establishes a bound for each term of the decomposition in Proposition \ref{CUE fact}.
\begin{lemma}\label{b4cs}
    For any $j,~k\in\mathbb{N}$, let $I=[-s,s]$ for some $s>0$, $f_j\in\{C_j^+,C_j^-,S_j^+,S_j^-\}$ and $\mathcal{H}_k\in\{\mathcal{C}_k^+,\mathcal{C}_k^-,\mathcal{S}_k^+,\mathcal{S}_k^-\}$. As an operator on $L^2(I)$, $M_{f_j}\mathcal{H}_k$ is with rank $1$ such that
    \begin{equation*}
        \|M_{f_j}\mathcal{H}_k\|_1=\|f_jh_k\|_{L^2(I^2)}\le\frac{2s(\pi s)^{j+k}}{\sqrt{(2j+1)(2k+1)}}
    \end{equation*}
    where $h_k$ is the corresponding kernel.
\end{lemma}
\begin{proof}
For any $j,~k\in\mathbb{N}$, the multiplication operator $M_{f_j}$ is always a rank $1$ operator. In addition, since the corresponding kernel $h_k$ is a function of single variable, as an integral kernel, $\mathcal{H}_k$ is also with rank $1$. Consequently, the composition operator $M_{f_j}\mathcal{H}_k$ must be a rank $1$ operator and hence, has one unique singular value $s_{jk}$. Recall the kernel of $M_{f_j}\mathcal{H}_k$ is $f_j(x)h_k(y)$, it follows that
\begin{equation*}
    \begin{split}
\|M_{f_j}\mathcal{H}_k\|^2_1&=s^2_{jk}=\|M_{f_j}\mathcal{H}_k\|_2^2=\|f_j(x)h_k(y)\|^2_{L^2(I^2)}\\&=\int_I|f_j(x)|^2dx\int_I|h_k(y)|^2dy\\
&\le\int_{-s}^s(\pi x)^{2j}dx\int_{-s}^s(\pi y)^{2k}dy\\
&=\frac{4s^2(\pi s)^{2j+2k}}{(2j+1)(2k+1)}
    \end{split}
\end{equation*}
from \eqref{2 to L^2}. Taking the square root yields the promised result.
\end{proof}

\subsection{CUE bulk}\label{CUE bulk}
The proof of the CUE bulk result depends on the Laurent expansion of the cosecant function $\csc$ after comparison between $K_{C_N}^{Bulk}$ and $K_{Sine}$. For $0<|x|<\pi$, recall the Laurent series:
\begin{equation}\label{csc}
    \csc(x)=\sum_{n=0}^\infty\frac{(-1)^{n+1}2\left(2^{2n-1}-1\right)B_{2n}x^{2n-1}}{(2n)!}
\end{equation}
where $B_k$ is the $k$th Bernoulli number. Before going into the main proof, we first introduce one lemma to control the main constant terms appearing in our discussion below. 
\begin{lemma}\label{Bell}
    For any $n\in\mathbb{Z}^+$, we have $|B_{2n}|\le Cn^{2n+\frac{1}{2}}(\pi e)^{2n}$ for some uniform constant $C$ where $B_k$ is the $k$th Bernoulli number.
\end{lemma}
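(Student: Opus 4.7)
The plan is to invoke the classical Euler formula linking the even-indexed Bernoulli numbers to the Riemann zeta function,
\begin{equation*}
|B_{2n}|=\frac{2(2n)!}{(2\pi)^{2n}}\zeta(2n),
\end{equation*}
which immediately separates the problem into a zeta factor and a factorial factor. The zeta factor is easy to handle: since $\zeta$ is decreasing on $(1,\infty)$, we have $1 \leq \zeta(2n) \leq \zeta(2) = \pi^2/6$ for every $n \geq 1$, so $\zeta(2n)$ contributes only a uniform multiplicative constant.

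Next I would control the factorial via a non-asymptotic form of Stirling's formula, for instance Robbins' estimate, which yields an absolute constant $C$ such that $(2n)! \leq C\sqrt{n}\,(2n)^{2n}e^{-2n}$ for every $n \geq 1$. Plugging this into Euler's identity and combining the constants gives
\begin{equation*}
|B_{2n}| \leq \frac{C'\sqrt{n}\,(2n)^{2n}}{(2\pi)^{2n}e^{2n}} = C'\sqrt{n}\,\frac{n^{2n}}{(\pi e)^{2n}} = \frac{C'\,n^{2n+\frac{1}{2}}}{(\pi e)^{2n}}.
\end{equation*}
Since $(\pi e)^{-2n} \leq (\pi e)^{2n}$ trivially for $n\geq 1$, this estimate is already stronger than the stated bound $|B_{2n}| \leq Cn^{2n+\frac{1}{2}}(\pi e)^{2n}$, and the lemma follows at once. (The stronger form with $(\pi e)^{-2n}$ is in fact what is needed to make the Laurent series \eqref{csc} convergent after dividing by $(2n)!$, and I suspect that is the version ultimately used in the sequel.)

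The argument is essentially bookkeeping, and there is no real obstacle: the only point requiring care is tracking the constants from Stirling and the zeta bound so that a single uniform $C$ (independent of $n$) absorbs them both. No more subtle analytical ingredients are required.
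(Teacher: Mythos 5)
Your proof is correct and follows essentially the same route as the paper: Euler's formula $|B_{2n}|=\frac{2(2n)!}{(2\pi)^{2n}}\zeta(2n)$, the bound $1\le\zeta(2n)\le\pi^2/6$, and Stirling's formula, the only cosmetic difference being that you use a non-asymptotic (Robbins-type) Stirling estimate while the paper takes a limit and then chooses $C>4\sqrt{\pi}$. Your side remark about the exponent is also on target: the argument actually yields $|B_{2n}|\le Cn^{2n+\frac{1}{2}}(\pi e)^{-2n}$, which is exactly the form invoked later in the proof of Theorem \ref{C bulk}, so the $(\pi e)^{2n}$ in the lemma statement should be read as $(\pi e)^{-2n}$.
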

\begin{proof} For any $n\in\mathbb{Z}^+$, according to \cite[Corollary $4.12$]{arakawa_bernoulli_2014}, the Bernoulli number $B_{2n}$ can be expressed as:
    \begin{equation*}
        B_{2n}=\frac{(-1)^{n+1}2(2n)!}{(2\pi)^{2n}}\zeta(2n)
    \end{equation*}
    where $\zeta$ is the Riemann zeta function. By the definition of $\zeta$, it is obvious that $1\le\zeta(t)\le\zeta(2)=\frac{\pi^2}{6}$ for any $t\in[2,+\infty)$. Thus, the dominated convergence theorem derives
\begin{equation*}
\lim_{t\in\mathbb{R},~t\rightarrow+\infty}\zeta(t)=\sum_{n=1}^\infty\lim_{t\in\mathbb{R},~t\rightarrow+\infty}\frac{1}{n^t}=1
\end{equation*}
and hence, $\lim\limits_{n\rightarrow+\infty}\zeta(2n)=1$.
Applying Stirling's formula: $n!e^n\sim\sqrt{2\pi}n^{n+\frac{1}{2}}$, it is not difficult to see
    \begin{equation*}
    \lim_{n\rightarrow+\infty}\frac{(\pi e)^{2n}|B_{2n}|}{n^{2n+\frac{1}{2}}}=4\sqrt\pi.
    \end{equation*}
Consequently, there exists a uniform constant $C>4\sqrt\pi$ such that $|B_{2n}|\le C n^{2n+\frac{1}{2}}(\pi e)^{2n}$ for any $n$.
\end{proof}

\medbreak\noindent {\itshape Proof of Theorem \ref{C bulk}.}\enspace Recall
\begin{equation*}
K_{C_N}^{Bulk}(x,y)=\frac{\sin(\pi(x-y))}{N\sin\left(\frac{\pi(x-y)}{N}\right)}
\end{equation*}
and
\begin{equation*}
K_{Sine}(x,y)=\frac{\sin(\pi(x-y))}{\pi(x-y)}.
\end{equation*}
On any compact domain $D^2\subset\mathbb{R}^2$, according to \eqref{csc}, we can write the difference as:
\begin{equation*}
K_{C_N}^{Bulk}(x,y)-K_{Sine}(x,y)=\frac{1}{N^2}\sum_{k=0}^\infty\frac{c_{2k+1}}{N^{2k}}(\pi(x-y))^{2k+1}\sin(\pi(x-y))=\frac{1}{N^2}\sum_{k=0}^\infty\frac{c_{2k+1}}{N^{2k}}A_{2k+1}(x,y)
\end{equation*}
where
\begin{equation*}
    c_{2k+1}=\frac{(-1)^k2(2^{2k+1}-1)B_{2k+2}}{(2k+2)!}.
\end{equation*}
Define an operator
\begin{equation*}
\mathcal{K}^d_N:=\frac{1}{N^2}\sum_{k=0}^\infty\frac{c_{2k+1}}{N^{2k}}\mathcal{A}_{2k+1}
\end{equation*}
on $L^2(D^2)$.
It is clear that if the right side above is convergent with respect to $\|\cdot\|_1$, then we must have $\mathcal{K}_{C_N}^{Bulk}-\mathcal{K}_{Sine}=\mathcal{K}_N^d$. In fact, 
Proposition \ref{CUE fact}, together with Lemma \ref{b4cs}, yields:
\begin{equation}\label{operator A}
\begin{split}
    &\|\mathcal{A}_{2k+1}\|_1=\sum_{j=0}^{2k+1}\left(\begin{matrix}
    2k+1\\
    j
\end{matrix}\right)\|M_{S^+_{j}}\mathcal{C}^-_{2k+1-j}\|_1+\sum_{j=0}^{2k+1}\left(\begin{matrix}
    2k+1\\
    j
\end{matrix}\right)\|M_{C^+_{j}}\mathcal{S}^-_{2k+1-j}\|_1\\&\le2\sum_{j=0}^{2k+1}\left(\begin{matrix}
    2k+1\\
    j
\end{matrix}\right)\frac{2s(s\pi)^{2k+1}}{\sqrt{(2j+1)(4k+4-(2j+1))}}\\&
\le\frac{4s(s\pi)^{2k+1}}{\sqrt{4k+3}}\sum_{j=0}^{2k+1}\left(\begin{matrix}
    2k+1\\j
\end{matrix}\right)=\frac{4s(2\pi s)^{2k+1}}{\sqrt{4k+3}}
\end{split}
\end{equation}
where the fact that $(2j+1)(4k+4-(2j+1))\ge 4k+3$ for any $0\le j\le 2k+1$ has been used in the third inequality and the binomial expression: $2^m=(1+1)^m$ in the last one. Establishing a constant $C_1$ such that the far right term in the last line of \eqref{operator A} is bounded by $C_1s(2\pi s)^{2k+1}$ for any $k$ is not difficult, given its fraction term with respect to $k$.

Now go back to our original operator; we see
\begin{equation}\label{series}
    \|\mathcal{K}_N^d\|_1\le\frac{C_1s}{N^2}\sum_{k=0}^\infty\frac{|c_{2k+1}|(2\pi s)^{2k+1}}{N^{2k}}\le\frac{C_1s}{N^2}\sum_{k=0}^\infty\frac{2^{2k+2}|B_{2k+2}|(2\pi s)^{2k+1}}{(2k+2)!N^{2k}}.
\end{equation}
For the numerator, there exists a uniform constant $C_2$ such that $|B_{2k}|\le C_2k^{2k+\frac{1}{2}}(\pi e)^{-2k}$ for any $k$ by Lemma \ref{Bell}. The denominator can be controlled by Stirling's formula-there is a constant $C_3$ satisfying $k!\ge C_3\sqrt{k}(k)^ke^{-k}$ for any $k$. Thus, each term in the summation of \eqref{series} is bounded as:
\begin{equation*}
        \frac{2^{2k+2}(2\pi s)^{2k+1}|B_{2k+2}|}{(2k+2)!N^{2k}}\le\frac{(2\pi s)^{2k+1}C_22^{2k+2}(k+1)^{2k+2}\sqrt{k+1}e^{2k+2}}{(\pi e)^{2k+2}C_3\sqrt{2k+2}(2k+2)^{2k+2}N^{2k}}=\frac{\sqrt2C_2s}{\pi C_3}\left(\frac{2s}{N}\right)^{2k}.
\end{equation*}
Recall $\frac{2s}{N}<1$ by our assumption, the trace class norm of $\mathcal{K}_N^d$ is in fact dominated by a convergent geometric sequence as:
\begin{equation*}
\|\mathcal{K}_N^d\|_1\le\frac{Cs^2}{N^2}\sum_{k=0}^\infty\left(\frac{2s}{N}\right)^k=\frac{Cs^2}{N^2}\frac{1}{1-\frac{16s^4}{N^4}}=\frac{CN^2s^2}{N^4-16s^4}
\end{equation*}
and hence, $\|\mathcal{K}_{C_N}^{Bulk}-\mathcal{K}_{Sine}\|_1$ is also bounded by the same term on the far right.\qed

\subsection{Other ensembles}\label{other bulk}
\medbreak\noindent {\itshape Proof of Theorem \ref{C2}.}\enspace We first consider the case of $\mathbb{SO}_{2N}$. Split the corresponding kernel as:
\begin{equation*}
\begin{split}
K_{H_N}^{Bulk}(x,y)
&=\frac{\sin\left(\left(1-\frac{1}{2N}\right)\pi(x-y)\right)}{2N\sin\left(\frac{\pi(x-y)}{2N}\right)}-\frac{\cos\left(\left(1-\frac{1}{2N}\right)\pi(x+y)+N\pi\right)}{2N\cos\left(\frac{\pi(x+y)}{2N}\right)}\\
&=\frac{\sin\left(\pi(x-y)\right)\cot\left(\frac{\pi(x-y)}{2N}\right)}{2N}-\frac{\cos\left(\pi(x-y)\right)}{2N}\\&~~-\frac{(-1)^N\cos\left(\pi(x+y)\right)}{2N}-\frac{(-1)^N\sin\left(\pi(x+y)\right)\tan\left(\frac{\pi(x+y)}{2N}\right)}{2N}\\&:=K_{1,N}(x,y)-K_{2,N}(x,y)-(-1)^NK_{3,N}(x,y)-(-1)^NK_{4,N}(x,y).
\end{split}
\end{equation*}
Observe $K_{1,N}$ converges to $ K_{Sine}$ as $N$ goes to infinity on $A^2\subset I^2$, it follows that the decomposition of the corresponding operator $\mathcal{K}_{1,N}-\mathcal{K}_{Sine}$ as in the proof of Theorem \ref{C bulk} must be the key in our argument. Using the Laurent expansion for the cotangent function $\cot$ now,
\begin{equation*}
K_{1,N}(x,y)-K_{Sine}(x,y)=\frac{1}{N^2}\sum_{k=0}^\infty\frac{b_{2k+1}}{N^{2k}}(\pi(x-y))^{2k-1}\sin(\pi(x-y))
\end{equation*}
where
\begin{equation*}
    b_{2k+1}=\frac{(-1)^{k+1}B_{2k+2}}{(2k+2)!}
\end{equation*}
is the Laurent coefficient for the $\cot$ function. The same as the previous proof, now we hope to bound the trace class norm of
\begin{equation*}
\frac{1}{N^2}\sum_{k=0}^\infty\frac{b_{2k+1}}{N^{2k}}\mathcal{A}_{2k-1}.
\end{equation*}
 For each term in this summation, according to \eqref{operator A},
\begin{equation*}
    \|\mathcal{A}_{2k-1}\|_1\le\frac{4s(2\pi s)^{2k-1}}{\sqrt{4k-1}}.
\end{equation*}
Hence, there is constant $C_1$ such that the right side above is bounded by $C_1s(2\pi s)^{2k-1}$ for any $k$.

Going back to the difference operator, it follows that
\begin{equation}\label{series2}
    \left\|\frac{1}{N^2}\sum_{k=0}^\infty\frac{b_{2k+1}}{N^{2k}}\mathcal{A}_{2k-1}\right\|_1\le\frac{C_1s}{N^2}\sum_{k=0}^\infty\frac{|B_{2k+2}|(2\pi s)^{2k-1}}{(2k+2)!N^{2k}}.
\end{equation}
By Lemma \ref{Bell} and Stirling's formula, each term in the right summation of $(\ref{series2})$ is bounded by $\frac{C_2(2s)^{2k-1}}{N^{2k}}$ for some constant $C_2$. Thus, according to similar discussion to the previous proof, the inequality
\begin{equation*}
 \left\|\frac{1}{N^2}\sum_{k=0}^\infty\frac{b_{2k+1}}{N^{2k}}\mathcal{A}_{2k-1}\right\|_1\le\frac{Cs}{N^2}\frac{1}{1-\frac{16s^4}{N^4}}=\frac{CN^2s}{N^4-16s^4}
\end{equation*}
holds for some uniform constant $C$.

The remaining kernels can be easily handled. The bound $\frac{2s}{N}$ for both $\|\mathcal{K}_{2,N}\|_1$ and $\|\mathcal{K}_{3,N}\|_1$ can be directly derived by the decomposition in Proposition \ref{fac2} and Lemma \ref{b4cs}.
For the last kernel $K_4$, directly using Taylor expansion for the tangent function leads to
\begin{equation*}
    K_{4,N}(x,y)=\frac{1}{2N}\sum_{k=0}^\infty a_{2k+1}\left(\frac{x+y}{2N}\right)^{2k+1}\sin\left(\pi(x+y)\right)=\sum_{k=0}^\infty\frac{a_{2k+1}}{(2N)^{2k+2}}A'_{2k+1}(x,y)
\end{equation*}
where 
\begin{equation*}
    a_{2k+1}=\frac{(-4)^{k+1}(1-4^{k+1})B_{2k+2}}{(2k+2)!}
\end{equation*}
are the Taylor coefficients for the $\tan$ function. As above, applying Lemma \ref{Bell} and Stirling's formula to the operator
    $\sum\limits_{k=0}^\infty\frac{a_{2k+1}}{(2N)^{2k+2}}\mathcal{A}'_{2k+1}$
yields:
\begin{equation*}
    \left\|\sum_{k=0}^\infty\frac{a_{2k+1}}{(2N)^{2k+2}}\mathcal{A}'_{2k+1}\right\|_1=\frac{1}{2N}\left\|\sum_{k=0}^\infty\frac{a_{2k+1}}{(2N)^{2k+1}}\mathcal{A}'_{2k+1}\right\|_1\le\frac{CNs}{N^4-16s^4}
\end{equation*}
for some constants $C$. It is because the inequality $\|\mathcal{A}'_{2k+1}\|_1\le(C_1s)(2\pi s)^{2k+1}$ can be derived from the the same discussion as for $\mathcal{A}_{2k+1}$ above.

Putting all the bounds above together, we see that the main order $N^{-1}$ is obtained by $\mathcal{K}_{2,N}$ and $\mathcal{K}_{3,N}$. Absorbing all the constants, we finally get
\begin{equation*}
    \|\mathcal{K}_{H_N}^{Bulk}-\mathcal{K}^{Sine}\|_1\le\frac{Cs}{N}.
\end{equation*}
The analogous result for other ensembles is obtained by a straightforward modification of the sign conventions in our analysis of $\mathbb{SO}_{2N}$, leveraging the structural similarities of their kernels outlined in Proposition \ref{kernel}.
\qed

\bigskip
\noindent{\bf Acknowledgements. }The author would like to thank Doctor Kyle Taljan for the inspiring ideas in his PhD thesis and Professor Mark Meckes for his comments and valuable feedback. The author is also grateful to an anonymous referee for the suggestion that helps improve the final convergence rates in this paper.

\newpage

\bibliographystyle{plain}
\bibliography{my_references}

\end{document}